\let\@fnsymbol\@arabic
\definecolor{purple}{rgb}{.5,.1,.7}
 \newtheorem{theorem}{Theorem}[section]
 \newtheorem{lemma}[theorem]{Lemma}
 \newtheorem{proposition}[theorem]{Proposition}
 \newtheorem{cor}[theorem]{Corollary}
\newcommand{\R}{\mathbb{R}}
 \newcommand{\N}{\mathbb{N}}
 \newcommand{\lam}{\lambda}
 \newcommand{\tel}[1]{\frac{1}{#1}}
 \DeclareMathOperator{\diag}{diag}
 \DeclareMathOperator{\dev}{dev}
\def\barr{\begin{array}}
\def\id{1\!\!1}
\def\tr{\textrm{tr}}
\def\dd{\displaystyle}
\def\barr{\begin{array}}
\def\earr{\end{array}}
\def\bec#1{\begin{equation}\label{#1}}
\def\becn{\begin{equation*}}
\def\endec{\end{equation}}
\def\endecn{\end{equation*}}
\begin{document}
\title{The exponentiated Hencky-logarithmic strain energy. Improvement of  planar polyconvexity}
\author{
Ionel-Dumitrel Ghiba\thanks{Ionel-Dumitrel Ghiba, \ \ \ \ Lehrstuhl f\"{u}r Nichtlineare Analysis und Modellierung, Fakult\"{a}t f\"{u}r Mathematik, Universit\"{a}t Duisburg-Essen, Thea-Leymann Str. 9, 45127 Essen, Germany;  Alexandru Ioan Cuza University of Ia\c si, Department of Mathematics,  Blvd. Carol I, no. 11, 700506 Ia\c si,
Romania; and  Octav Mayer Institute of Mathematics of the
Romanian Academy, Ia\c si Branch,  700505 Ia\c si, email: dumitrel.ghiba@uni-due.de, dumitrel.ghiba@uaic.ro}
  \quad
and \quad
Patrizio Neff\,\thanks{Corresponding author: Patrizio Neff,  \ \ Head of Lehrstuhl f\"{u}r Nichtlineare Analysis und Modellierung, Fakult\"{a}t f\"{u}r Mathematik, Universit\"{a}t Duisburg-Essen,  Thea-Leymann Str. 9, 45127 Essen, Germany, email: patrizio.neff@uni-due.de}  \quad
and \quad
Miroslav {\v{S}}ilhav{\'y}\thanks{Mathematical Institute AS CR, \v{Z}itn\'{a}25, 115 67 Praha 1,
Czech Republic, e-mail: silhavy@math.cas.cz}}
\maketitle

\begin{abstract}

In this paper we improve the result about the polyconvexity of the energies from the  family of isotropic volumetric-isochoric decoupled strain exponentiated Hencky energies defined in the first part of this series, i.e. \begin{align*}\hspace{-2mm}
 W_{_{\rm eH}}(F)= \left\{\begin{array}{lll}
\dd\frac{\mu}{k}\,e^{k\,\|{\rm dev}_n\log U\|^2}+\frac{\kappa}{2\,\widehat{k}}\,e^{\widehat{k}\,[(\log \det U)]^2}&\text{if}& \det\, F>0,\vspace{2mm}\\
+\infty &\text{if} &\det F\leq 0\,,
\end{array}\right.
\end{align*}  where $F=\nabla \varphi$ is the gradient of deformation,  $U=\sqrt{F^T F}$ is the right stretch tensor and $\dev_n\log {U}$
 is the deviatoric part  of the strain tensor $\log U$. The main result in this paper is that in plane elastostatics, i.e. for $n=2$,  the energies of this family are polyconvex for $k\geq \frac{1}{4}$, $\widehat{k}\geq \frac{1}{8}$, extending a previous result which proves  polyconvexity for $k\geq \frac{1}{3}$,  $\widehat{k}\geq \frac{1}{8}$.   This  leads immediately  to an extension of the  existence result.
\\
\vspace*{0.25cm}
\\
{\textbf{Key words:} finite isotropic elasticity,  Hencky strain, logarithmic strain, natural strain,   polyconvexity,  rank one convexity,  volumetric-isochoric split, existence of minimizers, plane elastostatics,   existence of minimizers.}
\end{abstract}

\section{Introduction}
\subsection{Exponentiated Hencky energy}

 In a previous  series of papers \cite{NeffGhibaLankeit,NeffGhibaPoly,NeffGhibaPlasticity} we have modified the Hencky energy and  considered the family of energies
\begin{align}\label{thdefHen}\hspace{-2mm}
 W_{_{\rm eH}}(F)=W_{_{\rm eH}}^{\text{\rm iso}}(\frac F{\det F^{\frac{1}{2}}})+W_{_{\rm eH}}^{\text{\rm vol}}(\det F^{\tel 2}\cdot \id) = \left\{\begin{array}{lll}
\dd\frac{\mu}{k}\,e^{k\,\|{\rm dev}_2\log U\|^2}+\frac{\kappa}{2\,\widehat{k}}\,e^{\widehat{k}\,[(\log \det U)]^2}&\text{if}& \det\, F>0,\vspace{2mm}\\
+\infty &\text{if} &\det F\leq 0\,.
\end{array}\right.
\end{align}
 We have called this the exponential Hencky energy.
Here, $\mu>0$ is the infinitesimal shear modulus,
$\kappa=\frac{2\mu+3\lambda}{3}>0$ is the infinitesimal bulk modulus with $\lambda$ the first Lam\'{e} constant, $k,\widehat{k}$ are additional dimensionless
parameters, $F=\nabla \varphi$ is the gradient of deformation,  $U=\sqrt{F^T F}$ is the right stretch tensor and $\dev_2\log {U} =\log {U}-\frac{1}{2}\,
\tr(\log {U})\cdot\id$
 is the deviatoric part  of the strain tensor $\log U$. For $X\in\R^{2\times 2}$,
  $\|{X}\|^2=\langle {X},{X}\rangle$ is the Frobenius tensor norm, $\tr{(X)}=\langle {X},{\id}\rangle$ and $\id$ denotes the identity tensor on $\R^{2\times 2}$. For further notations  we refer to \cite{NeffGhibaLankeit}.

 Our renewed interest in the Hencky energy is motivated by a recent finding that the Hencky energy (not the logarithmic strain itself) exhibits  a fundamental property. By purely differential geometric reasoning, in  forthcoming papers \cite{NeffEidelOsterbrinkMartin_Riemannianapproach,Neff_Osterbrink_Martin_hencky13,Neff_Nagatsukasa_logpolar13} (see also  {\cite{Neff_log_inequality13,LankeitNeffNakatsukasa}})  it will be shown that
\begin{align}\label{geoprop}
{\rm dist}^2_{{\rm geod}}\left((\det F)^{1/n}\cdot \id, {\rm SO}(n)\right)&={\rm dist}^2_{{\rm geod,\mathbb{R}_+\cdot \id}}\left((\det F)^{1/n}\cdot \id, \id\right)=|\log \det F|^2,\notag\\
{\rm dist}^2_{{\rm geod}}\left( \frac{F}{(\det F)^{1/n}}, {\rm SO}(n)\right)&={\rm dist}^2_{{\rm geod,{\rm SL}(n)}}\left( \frac{F}{(\det F)^{1/n}}, {\rm SO}(n)\right)=\|\dev_n \log U\|^2,
\end{align}
where ${\rm dist}_{{\rm geod}}$ is the canonical left invariant geodesic distance on the Lie group ${\rm GL}^+(n)$ and ${\rm dist}_{{\rm geod,{\rm SL}(n)}}$, ${\rm dist}_{{\rm geod,\mathbb{R}_+\cdot \id}}$ denote the corresponding geodesic distances on the Lie groups ${\rm SL}(n)$ and $\mathbb{R}_+\cdot \id$, respectively (see \cite{Neff_Osterbrink_Martin_hencky13,Neff_Nagatsukasa_logpolar13}). On the other hand,   some constitutive issues, e.g.  the invertible true-stress-true-strain relation and the monotonicity of the Cauchy stress tensor as a function of $\qopname \relax o{log}B$,  where $B=F\, F^T$ is the left Cauchy-Green tensor,  recommend the energies from our family of exponentiated energies as good candidates in the study of nonlinear deformations. {Moreover, in the first part \cite{NeffGhibaLankeit} it is shown that the proposed energies have some other very useful properties: analytical solutions are in  agreement with Bell's experimental data; planar pure Cauchy shear stress produces biaxial pure shear strain and the value 0.5 of Poisson's ratio corresponds to exact incompressibility. It is found \cite{NeffGhibaLankeit} that the analytical expression of the pressure is in concordance with the classical Bridgman’s compression data for natural rubber. An immediate application to rubber-like materials is proposed in \cite{Montella15}. }
We have also shown that the energies from the family of exponentiated energies  improve several features of the formulation with respect to mathematical issues regarding well-posedness. We have established that, in planar elasto-statics, the exponentiated energies $W_{_{\rm eH}}$    satisfy the Legendre-Hadamard condition (rank-one convexity) \cite{NeffGhibaLankeit}   for $k\geq \frac{1}{4}$, $\widehat{k}\geq \frac{1}{8}$, while they are  polyconvex \cite{NeffGhibaPoly}  for $k\geq \frac{1}{3}$, $\widehat{k}\geq \frac{1}{8}$ and satisfy a coercivity  estimate \cite{NeffGhibaPoly} for $k> 0$, $\widehat{k}> 0$. These results  now allow us to show the existence of minimizers  \cite{NeffGhibaPoly} for $k\geq \frac{1}{3}$, $\widehat{k}\geq \frac{1}{8}$.

\subsection{Polyconvexity}

  The notion of polyconvexity has been introduced into the framework of
elasticity by  John Ball in his seminal paper \cite{Ball77,Ball78,Raoult86}. Various nonlinear issues, results and extensive references are collected in Dacorogna \cite{Dacorogna08}. In the two dimensional case, a free energy function $W(F)$ is called  polyconvex if and only if it is expressible in the form
$W(F) =P(F,\det F)$, $P:\mathbb{R}^{10}\rightarrow\mathbb{R}$, where $P(\cdot,\cdot)$ is convex. Polyconvexity is the cornerstone notion for a proof of the existence of minimizers by the direct methods of the calculus of variations
for energy functions satisfying no polynomial growth conditions, which is  the case in nonlinear elasticity since one has the natural requirement
$W(F)\rightarrow\infty$ as $\det F\rightarrow0$. Polyconvexity is best understood for isotropic energy functions, but it is  not restricted to isotropic response.
The polyconvexity condition in the case of space dimension 2 was conclusively discussed  by Rosakis \cite{Rosakis98} and \v{S}ilhav\'{y} \cite{Silhavy97,Silhavy99b,Silhavy02bb,Silhavy03,silhavy2002monotonicity,vsilhavy2001rank,SilhavyPRE99}, while the case of arbitrary spatial dimension was studied by Mielke \cite{Mielke05JC}.  The $n$-dimensional case of  the theorem established by Ball \cite[page 367]{Ball77} has been reconsidered by Dacorogna and Marcellini \cite{DacorognaMarcellini}, Dacorogna and Koshigoe \cite{DacorognaKoshigoe} and  Dacorogna and Marechal \cite{DacorognaMarechal}. It was a long standing open question how to extend the notion of polyconvexity in a meaningful way to anisotropic materials \cite{Ball02}. An answer has been provided in a series  of papers \cite{Schroeder_Neff_Ebbing07,cism_book_schroeder_neff09,Schroeder_Neff01,Hartmann_Neff02,
Schroeder_Neff04,Balzani_Neff_Schroeder05,Schroeder_Neff_Ebbing07}.

Rank-one convexity domains for the Hencky energy \[
	W_{_{\rm H}}(F) = \widehat{W}_{_{\rm H}}(U) \;=\; {\mu}\,\|{\rm dev}_n\log U\|^2+\frac{\kappa}{2}\,[{\rm tr}(\log U)]^2\,
\]
have been established in  \cite{xiao1997logarithmic,Neff_Diss00,Bertram05,glugegraphical}. Satisfaction of the Baker-Ericksen inequalities in terms of the logarithmic strain tensor is discussed in  \cite{Criscione2000,Criscione2005}, while necessary conditions for Legendre-Hadamard ellipticity are given in  \cite{Walton05}.
\subsection{Motivation}
We have remarked that the rank-one convexity holds true for $k\geq \frac{1}{4}$, while the polyconvexity holds true for $k\geq \frac{1}{3}$. Hence, the following question arose: is there a  gap for $\frac{1}{3}> k\geq \frac{1}{4}$?  In a previous work we have used the sufficiency condition for polyconvexity which  has been discovered by Steigmann \cite{SteigmannMMS03,SteigmannQJ03}. Eventually, it is based on a polyconvexity criterion of Ball \cite{Ball77}, but it allows one to express polyconvexity directly in terms of the principal isotropic invariants of the right stretch tensor $U$ (see also \cite{Davis57,Ogden83,Lewis03,Lewis96,Lewis96b,Borwein2010,eremeyev2007constitutive}). As  it turns out, in plane elastostatics, Steigmann's criterion is already
hidden in another sufficiency criterion for polyconvexity given earlier by Rosakis \cite{Rosakis92}. However,  Steigmann's criterion is clearly not necessary for polyconvexity.  Hence, our previous results may be improved.

In this paper, we use a direct approach based on the fact that the function
$Y:[1,\infty)\rightarrow\mathbb{R}$  given by
$
Y(\theta)=e^{\frac{k}{2}\, \log^2\theta}, \ \theta\in [1,\infty),
$
is convex for $k\geq \frac{1}{4}$ and  it is also increasing for $\theta\geq 1$, while $Z:{\rm GL}^+(2)\rightarrow[1,\infty)$  given by
$
Z(F)=\frac{\lambda_{\rm max}^2}{\det F}, \ F \in {\rm GL}^+(2)$, where $\lambda_{\rm max}$ is the largest singular values (principal stretches)  of $F$,  is polyconvex. Therefore,  we prove that   the exponentiated Hencky energies are in fact polyconvex  for $k\geq \frac{1}{4}$ and the main existence results is also valid for these values of the fitting parameter $k$.

\section{Preliminaries. Auxiliary results}

\subsection{Formulation of the static problem in the planar case}\label{Formulation}\setcounter{equation}{0}
The static problem in the planar case consists in finding the solution $\varphi$ of the equilibrium  equation
\begin{align}\label{exst}
0={\rm Div} \,S_1(\nabla \varphi)\qquad  \text{in} \qquad \Omega\subset
{\R^2},
\end{align}
where the first Piola-Kirchhoff stress tensor corresponding to the energy $W_{_{\rm eH}}(F)$ is given by the constitutive equation
\begin{align}\label{pks}
&S_1(F)=\left[2{\mu}\,e^{k\,\|\dev_2\log\,U\|^2}\cdot \dev_2\log\,U+{\kappa}\,e^{\widehat{k}\,[\tr(\log U)]^2}\,\tr(\log U)\cdot \id\right]F^{-T},  \qquad x\in\overline{\Omega},
\end{align}
with $F=\nabla \varphi, \ U= \sqrt{F^TF}$. The above system of equations is supplemented, in the
case of the mixed problem,  by the boundary conditions
\begin{align}\label{cl1}
{\varphi}({x})&=\widehat{\varphi}_i({x}) \qquad \text{ on  }\quad \Gamma_D,\qquad
{S}_1({x}).\,n=\widehat{s}_1({x}) \qquad \text{ on  }\quad \Gamma_N,
\end{align}
where $\Gamma_D,\Gamma_N$  are subsets of the boundary $\partial \Omega$, so that $\Gamma_D\cup\overline{\Gamma}_N=\partial \Omega$,
$\Gamma_D\cap{\Gamma}_N=\emptyset$, ${n}$ is the unit outward normal to the boundary and  $\widehat{\varphi}_i, \widehat{s}_1$ are prescribed fields.

\subsection{Auxiliary results}
In \cite{NeffGhibaLankeit} and \cite{NeffGhibaPoly} the following results were established.
\begin{lemma} {\rm \cite{NeffGhibaLankeit}}
\label{thm:w2d}
 Let $k\in\R$ and the matrix $F\in{\rm GL}^+(2)$ with singular values $\lam_1,\lam_2$.  Then
 \begin{align}\label{wf3}
W_{_{\rm eH}}^{\rm iso}(F)=e^{k\,\|{\rm dev}_2 \log U\|^2}=e^{k\,\|\log \frac{U}{\det U^{1/2}}\|^2}=g(\lambda_1,\lambda_2),\ \ \text{where}\  g:\mathbb{R}^2_+\rightarrow\mathbb{R},\ \  g(\lambda_1,\lambda_2):=e^{\frac{k}{2}\,\left(\log \frac{\lambda_1}{\lambda_2}\right)^2}.
\end{align}
\end{lemma}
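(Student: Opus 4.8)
The plan is to establish the formula in Lemma~\ref{thm:w2d} by a direct computation, reducing everything to the singular values $\lambda_1,\lambda_2$ of $F$. The key observation is that all the quantities appearing in $W_{_{\rm eH}}^{\rm iso}$ — namely $\log U$, its deviatoric part $\dev_2\log U$, and the Frobenius norm — are isotropic functions of $U$, hence depend only on the eigenvalues of $U$, which are precisely the singular values $\lambda_1,\lambda_2$ of $F$.

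First I would diagonalize. Since $F\in\GL^+(2)$, the polar/singular value decomposition gives $U=\sqrt{F^TF}=Q\,\diag(\lambda_1,\lambda_2)\,Q^T$ for some $Q\in\OO(2)$, with $\lambda_1,\lambda_2>0$. Because $\log$ and $\dev_2$ are isotropic tensor functions, $\log U$ has eigenvalues $\log\lambda_1,\log\lambda_2$, and the Frobenius norm is invariant under the orthogonal conjugation by $Q$. Thus I may compute directly in the diagonal frame, treating $\log U=\diag(\log\lambda_1,\log\lambda_2)$. The trace is $\tr(\log U)=\log\lambda_1+\log\lambda_2=\log(\lambda_1\lambda_2)=\log\det U$, which already explains the middle equality in \eqref{wf3} since $\dev_2\log U=\log U-\tfrac12\tr(\log U)\cdot\id=\log(U/\det U^{1/2})$.

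Next I would compute the norm explicitly. In the diagonal frame, $\dev_2\log U=\diag\bigl(\tfrac12(\log\lambda_1-\log\lambda_2),\,\tfrac12(\log\lambda_2-\log\lambda_1)\bigr)$, since subtracting half the trace from each diagonal entry recenters them to $\pm\tfrac12\log(\lambda_1/\lambda_2)$. Squaring and summing the two diagonal entries gives $\|\dev_2\log U\|^2=2\cdot\tfrac14\bigl(\log\tfrac{\lambda_1}{\lambda_2}\bigr)^2=\tfrac12\bigl(\log\tfrac{\lambda_1}{\lambda_2}\bigr)^2$. Substituting into $e^{k\,\|\dev_2\log U\|^2}$ yields exactly $e^{\frac{k}{2}(\log\tfrac{\lambda_1}{\lambda_2})^2}=g(\lambda_1,\lambda_2)$, completing the chain of equalities.

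There is no serious obstacle here: the statement is essentially a bookkeeping identity that records how the isochoric part of the exponentiated Hencky energy looks once expressed through the principal stretches. The only point requiring any care is the invariance argument justifying the passage to the diagonal frame — i.e. that $\|\dev_2\log U\|^2$ depends only on the eigenvalues $\lambda_1,\lambda_2$ and not on the orientation $Q$ — but this follows immediately from the orthogonal invariance of the Frobenius norm and the fact that $\dev_2$ commutes with orthogonal conjugation. The payoff is that this representation makes the later convexity analysis tractable, since it isolates the single scalar variable $\theta=\lambda_1/\lambda_2$ (or $\lambda_{\max}^2/\det F$) on which the subsequent argument will hinge.
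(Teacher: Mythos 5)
Your proof is correct: the passage to the diagonal frame (justified by orthogonal invariance of the Frobenius norm and the fact that $\log$ and $\dev_2$ commute with orthogonal conjugation), the identification $\dev_2\log U=\log U-\tfrac12\tr(\log U)\cdot\id=\log\bigl(U/\det U^{1/2}\bigr)$, and the norm computation $\|\dev_2\log U\|^2=\tfrac12\bigl(\log\tfrac{\lambda_1}{\lambda_2}\bigr)^2$ are all exactly right. Note that the present paper does not prove this lemma at all — it is quoted from Part I of the series \cite{NeffGhibaLankeit} — and your direct spectral computation is precisely the standard argument behind the cited result, so your write-up is a correct, self-contained version of what the paper takes for granted.
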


\begin{lemma}\label{lemaJH} {\rm \cite{NeffGhibaLankeit}} Let $m\in\N$. Then the
 function
$
 t\mapsto e^{\widehat{k}\,(\log(t))^m}
$
is convex if and only if
$
 \widehat{k}\geq \tel{m^{(m+1)}}.
$
\end{lemma}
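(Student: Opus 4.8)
The plan is to reduce the statement to a one-variable convexity criterion and then to a polynomial optimization. Set $f(t)=e^{\widehat{k}\,(\log t)^m}$ and substitute $u=\log t$, so that $\frac{d}{dt}=e^{-u}\frac{d}{du}$. With $\psi(u):=\widehat{k}\,u^{m}$ a direct differentiation gives
\begin{align*}
f''(t)=e^{-2u}\,e^{\psi(u)}\bigl[\psi''(u)+\psi'(u)^{2}-\psi'(u)\bigr].
\end{align*}
Because the prefactor $e^{-2u}e^{\psi(u)}$ is strictly positive, convexity of $f$ is equivalent to nonnegativity of the bracket, i.e. to the inequality $\psi''+(\psi')^{2}-\psi'\ge 0$ on the relevant range of $u$. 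This is the natural criterion for functions of the form $e^{\psi(\log t)}$ and isolates exactly the interplay between $\psi$ and the logarithmic change of variables.

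Next I would insert $\psi'(u)=\widehat{k}\,m\,u^{m-1}$ and $\psi''(u)=\widehat{k}\,m(m-1)\,u^{m-2}$ and factor, obtaining
\begin{align*}
\psi''+(\psi')^{2}-\psi'=\widehat{k}\,m\,u^{m-2}\,\bigl[\widehat{k}\,m\,u^{m}-u+(m-1)\bigr].
\end{align*}
For $u\ge 0$ (that is $t\ge 1$, the range in which this term is used, cf. the function $Y(\theta)$ on $[1,\infty)$) the factor $u^{m-2}$ is nonnegative, so, noting that nonnegativity already forces $\widehat{k}>0$, the whole expression is nonnegative precisely when $R(u):=\widehat{k}\,m\,u^{m}-u+(m-1)\ge 0$ for every $u> 0$. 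Rearranged, this reads $\widehat{k}\ge \frac{u-(m-1)}{m\,u^{m}}$ for all $u>0$, so the sharp threshold is the supremum of the right-hand side. A one-line computation shows this function has its unique critical point at $u=m$, where it attains the value $\frac{1}{m^{m+1}}$; hence the condition is exactly $\widehat{k}\ge \frac{1}{m^{m+1}}$.

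The main work is this last optimization together with the two directions of the equivalence: sufficiency follows since $\widehat{k}\ge \frac{1}{m^{m+1}}$ makes $R(u)\ge 0$ for all $u>0$, while for necessity it suffices to evaluate at the worst point $u=m$ (i.e. $t=e^{m}$), where $R(m)=\widehat{k}\,m^{m+1}-1<0$ as soon as $\widehat{k}<\frac{1}{m^{m+1}}$, destroying convexity. The subtle point to keep in mind is the role of the sign of $u^{m-2}$: for even $m$ (in particular the case $m=2$, $\frac{1}{m^{m+1}}=\frac{1}{8}$, actually used for the volumetric term) the factor is nonnegative for all real $u$ and the statement holds on all of $(0,\infty)$, whereas for odd $m\ge 3$ convexity genuinely fails for $u<0$, so the relevant assertion is on $t\ge 1$. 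I expect the optimization and this parity/domain bookkeeping to be the only real obstacles; the differentiation itself is routine.
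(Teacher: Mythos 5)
Your computation is correct, and it is the natural argument; note that this paper never proves Lemma \ref{lemaJH} at all --- it is imported from Part I of the series \cite{NeffGhibaLankeit} --- so the only proof to compare with is the cited one, which is the same kind of direct second-derivative computation you perform. Concretely, the identity $f''(t)=e^{-2u}\,e^{\psi(u)}\bigl[\psi''(u)+\psi'(u)^{2}-\psi'(u)\bigr]$ with $u=\log t$, the factorization $\widehat{k}\,m\,u^{m-2}\bigl[\widehat{k}\,m\,u^{m}-u+(m-1)\bigr]$, the optimization $\sup_{u>0}\frac{u-(m-1)}{m\,u^{m}}=\frac{1}{m^{m+1}}$ attained at $u=m$, and the necessity check at $t=e^{m}$ are all accurate.

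What deserves emphasis is that your ``parity/domain bookkeeping'' is not a side remark but a genuine correction to the statement as printed. For odd $m\geq 3$ and $u<0$ small in magnitude, $u^{m-2}<0$ while the second factor tends to $m-1>0$, so $f''(t)<0$ for $t$ slightly below $1$; hence $t\mapsto e^{\widehat{k}(\log t)^{m}}$ is not convex on $(0,\infty)$ for any $\widehat{k}>0$, and the lemma for arbitrary $m\in\mathbb{N}$ on the natural domain $(0,\infty)$ is false for odd $m\geq 3$. It is correct for even $m$ on $(0,\infty)$, and for every $m$ on $[1,\infty)$. This is harmless for the present paper, which only uses the case $m=2$ (convexity of $Y$ for $k\geq\frac14$, i.e. $\widehat{k}=\frac k2\geq\frac18$, and of the volumetric term for $\widehat{k}\geq\frac18$), but the same defect propagates to the parenthetical $m=3$, $\widehat{k}\geq\frac{1}{81}$ claim in Proposition \ref{corolarbun}: since $\det$ is affine along rank-one lines, rank-one convexity of $F\mapsto e^{\widehat{k}(\log\det F)^{3}}$ on ${\rm GL}^+(n)$ is equivalent to convexity of $t\mapsto e^{\widehat{k}(\log t)^{3}}$ on $(0,\infty)$, which fails. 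One small point to tighten in your write-up: the ``only if'' direction needs $\widehat{k}>0$ as a standing assumption rather than as a consequence --- $\widehat{k}=0$ gives a constant, hence convex, function, and for $m=1$ every $\widehat{k}\leq 0$ gives the convex function $t^{\widehat{k}}$ --- so your passing claim that nonnegativity ``forces $\widehat{k}>0$'' should be stated as a hypothesis (which is how the parameter is used throughout the paper anyway).
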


This lemma together with a results established in  \cite[page 213]{Dacorogna08} led to:

\begin{proposition}\label{corolarbun}{\rm (Convexity of the volumetric part) \cite{NeffGhibaLankeit}}
The function
\[
 F\mapsto W_{_{\rm eH}}^{\rm vol}(F):=e^{\widehat{k}\,(\log\det F)^m}, \quad F\in {\rm GL}^+(n)
\]
is rank-one convex in $F$ for $\widehat{k}\geq \tel{m^{(m+1)}}$. ({More explicitly, for $m=2$ this means $\widehat{k}\geq \tel8$, in case of $m=3$ rank-one convexity holds for $\widehat{k}\geq \tel{81}$.})
\end{proposition}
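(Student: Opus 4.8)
The plan is to collapse the rank-one convexity of this matrix-valued expression to the one-dimensional convexity statement already obtained in Lemma \ref{lemaJH}, and then to lift that scalar convexity back to the full deformation gradient through the determinant-composition result of Dacorogna. The first observation is that $W_{_{\rm eH}}^{\rm vol}$ depends on $F$ only through its determinant: setting $g(t):=e^{\widehat{k}\,(\log t)^m}$ for $t>0$, one has $W_{_{\rm eH}}^{\rm vol}(F)=g(\det F)$ on ${\rm GL}^+(n)$. This reduces the whole question to understanding the single scalar function $g$ on the positive half-line.

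Second, I would invoke Lemma \ref{lemaJH} verbatim: it states that $t\mapsto e^{\widehat{k}\,(\log t)^m}$ is convex on $(0,\infty)$ if and only if $\widehat{k}\geq\tel{m^{(m+1)}}$. Hence, precisely in the admissible parameter range, $g$ is a convex function of one real variable. Third, I would appeal to the cited result of Dacorogna \cite[page 213]{Dacorogna08}: for any convex $g$, the composite $F\mapsto g(\det F)$ is polyconvex on ${\rm GL}^+(n)$, and polyconvexity implies rank-one convexity, which is exactly the claim. For the rank-one statement alone there is an even more transparent route that I would at least mention: along any rank-one line $t\mapsto F+t\,a\otimes b$ the matrix-determinant lemma gives $\det(F+t\,a\otimes b)=\det F\,\bigl(1+t\,\langle F^{-1}a,b\rangle\bigr)$, which is affine in $t$; composing the convex $g$ with this affine map yields a convex function of $t$, i.e.\ rank-one convexity of $g(\det F)$.

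Given that both ingredients are supplied, the argument has essentially no internal obstacle; the proof is a combination step rather than a computation. The only matters that genuinely require care are matching the hypotheses of the determinant-composition theorem to the present setting — a convex $g$ defined on $(0,\infty)$, with $F$ ranging over ${\rm GL}^+(n)$, and the convention of extending $g$ by $+\infty$ for non-positive determinant if one wishes to regard $W_{_{\rm eH}}^{\rm vol}$ as a function on all of $\R^{n\times n}$ — and observing that the construction in fact delivers the stronger conclusion of polyconvexity, of which the stated rank-one convexity is an immediate consequence.
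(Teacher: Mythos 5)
Your proposal is correct and follows essentially the same route as the paper: the paper obtains this proposition exactly by combining Lemma \ref{lemaJH} (scalar convexity of $t\mapsto e^{\widehat{k}\,(\log t)^m}$ precisely for $\widehat{k}\geq \tel{m^{(m+1)}}$) with the determinant-composition result cited from Dacorogna \cite[page 213]{Dacorogna08}. Your additional remark that rank-one convexity also follows directly because $\det(F+t\,a\otimes b)$ is affine in $t$ is a valid and more elementary observation, but it does not change the substance of the argument.
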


\begin{theorem}{\rm (Coercivity) \cite{NeffGhibaPoly}}  Assume for the elastic moduli $\mu>0,\ \kappa>0$ and $k>0, \ \widehat{k}>0$.  Consider  the energy
$
I(\varphi)=\int_\Omega W_{_{\rm eH}}(\nabla \varphi(x)) \,dx,
$
 where
$
W_{_{\rm eH}}(F)=\widehat{W}_{_{\rm eH}}(U)= \frac{\mu}{k}\,e^{k\,\|\dev_2\log U\|^2}+\frac{\kappa}{2\,\widehat{k}}\,e^{\widehat{k}\,|{\rm tr}(\log U)|^2}.
$ Then $I(\varphi)$ is {\bf  $q$-coercive} for any $1\leq q<\infty$.
\end{theorem}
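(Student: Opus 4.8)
The plan is to establish the pointwise lower bound: for every $q\in[1,\infty)$ there exist constants $c_1(q)>0$ and $c_2(q)\in\R$ such that $W_{_{\rm eH}}(F)\geq c_1(q)\,\|F\|^q-c_2(q)$ for all $F\in\GLpz$; integrating over $\Omega$ then yields that $I(\varphi)\geq c_1(q)\,\|\nabla\varphi\|_{L^q}^q-c_2(q)\,|\Omega|$, i.e. $q$-coercivity of $I$. The first step is to pass to singular values. Writing $\lambda_1\geq\lambda_2>0$ for the singular values of $F$ and recalling $\|F\|^2=\lambda_1^2+\lambda_2^2$, I would use Lemma~\ref{thm:w2d} together with the identity $\tr(\log U)=\log(\lambda_1\lambda_2)=\log\det F$ to rewrite the energy purely in terms of the two invariants $a:=\log(\lambda_1/\lambda_2)\geq0$ and $b:=\log(\lambda_1\lambda_2)=\log\det F\in\R$, obtaining
\begin{equation*}
W_{_{\rm eH}}(F)=\frac{\mu}{k}\,e^{\frac{k}{2}\,a^2}+\frac{\kappa}{2\widehat{k}}\,e^{\widehat{k}\,b^2}.
\end{equation*}

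Next I would bound $\|F\|^q$ from above by the energy. Since $\|F\|^2=\lambda_1^2+\lambda_2^2\leq2\lambda_1^2$, it suffices to control $\lambda_1^q=\lambda_{\max}^q$. The decisive observation is the decomposition $\log\lambda_1=\tfrac12\big(\log\det F+\log(\lambda_1/\lambda_2)\big)=\tfrac12(a+b)$, which separates the volumetric and isochoric contributions and matches exactly the two terms of the energy. Hence $\lambda_1^q=e^{q(a+b)/2}$, and the elementary inequality $xy\leq\tfrac12(x^2+y^2)$ applied to $x=e^{qa/2}$, $y=e^{qb/2}$ gives $e^{q(a+b)/2}\leq\tfrac12 e^{qa}+\tfrac12 e^{qb}$.

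The final step is to dominate these linear exponentials by the quadratic ones present in $W_{_{\rm eH}}$. Completing the square shows $\tfrac{k}{2}a^2-qa\geq-\tfrac{q^2}{2k}$ and $\widehat{k}\,b^2-qb\geq-\tfrac{q^2}{4\widehat{k}}$ for all admissible $a,b$, so that $e^{qa}\leq e^{q^2/(2k)}e^{\frac{k}{2}a^2}$ and $e^{qb}\leq e^{q^2/(4\widehat{k})}e^{\widehat{k}b^2}$. Combining the three displays yields $\lambda_1^q\leq C(q)\,W_{_{\rm eH}}(F)$ with $C(q)=\max\!\big(\tfrac{k}{2\mu}e^{q^2/(2k)},\tfrac{\widehat{k}}{\kappa}e^{q^2/(4\widehat{k})}\big)$, and therefore $W_{_{\rm eH}}(F)\geq 2^{-q/2}C(q)^{-1}\,\|F\|^q$, which is the desired estimate (in fact with $c_2=0$).

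I expect no serious obstacle: the only point requiring care is coordinating the two strain measures, which is handled cleanly by the decomposition of $\log\lambda_{\max}$ into volumetric and isochoric parts. The structural reason the statement holds for \emph{every} finite $q$ — rather than only up to a critical exponent, as for energies of polynomial growth — is that each term of $W_{_{\rm eH}}$ is the exponential of a \emph{quadratic} function of a logarithmic strain, and a quadratic eventually dominates any linear function; equivalently, $e^{c(\log\lambda)^2}$ outgrows every power $\lambda^q$. Note also that in the planar case $\|\Cof F\|=\|F\|$, so no separate cofactor growth condition is needed, and the blow-up of the volumetric term as $\det F\to0^+$ is already encoded in $b\to-\infty$.
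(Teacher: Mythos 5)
Your proof is correct, and every step checks out: the reduction to singular values via Lemma \ref{thm:w2d}, the identity $\log\lambda_1=\tfrac12\bigl(\log(\lambda_1/\lambda_2)+\log(\lambda_1\lambda_2)\bigr)$, the Young inequality $e^{qa/2}e^{qb/2}\le\tfrac12(e^{qa}+e^{qb})$, and the two completed squares (whose minima $-q^2/(2k)$ and $-q^2/(4\widehat{k})$ are computed correctly) combine to give the pointwise bound $W_{_{\rm eH}}(F)\ge 2^{-q/2}C(q)^{-1}\|F\|^q$, which integrates to $q$-coercivity. One caveat on the comparison itself: this paper does not prove the theorem — it is quoted verbatim from \cite{NeffGhibaPoly} (Part II of the series), so there is no in-paper proof to measure you against. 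Relative to the argument in that cited paper, your route is organized differently but rests on the same structural fact that an exponential of a \emph{quadratic} in logarithmic strain dominates every power $\lambda^q$. The Part II argument first merges the two energy terms into a single exponential, using the orthogonal decomposition $\|\log U\|^2=\|\dev_2\log U\|^2+\tfrac12(\tr(\log U))^2$ together with AM--GM on the sum of exponentials, and then completes one square against $\log\lambda_{\max}\le\|\log U\|$; you instead keep the isochoric and volumetric terms separate, split $\log\lambda_{\max}$ into the matching parts $a$ and $b$, and complete a square in each variable. Your version is slightly more economical (no AM--GM loss, explicit constant $C(q)$, and $c_2=0$), while the merged-exponential version generalizes more readily to statements phrased directly in terms of $\|\log U\|$. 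Your closing remark that $\|\Cof F\|=\|F\|$ in the planar case, so no separate cofactor estimate is needed, is also correct and is precisely why the two-dimensional coercivity statement takes this simple form.
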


\section{Improved polyconvexity result }\setcounter{equation}{0}

By Lemma \ref{wf3}, the function $W_{_{\rm eH}}^{\rm iso}(F)=e^{k\,\|{\rm dev}_2 \log U\|^2}$ is given by
$
W_{_{\rm eH}}^{\rm iso}(F)=e^{\frac{k}{2}\,\log ^2\frac{\lambda_1}{\lambda_2}}=e^{\frac{k}{2}\,\log ^2\frac{\lambda_1^2}{\det F}}
$
for each $F\in{\rm GL}^+(2)$, where $\lambda_1\geq \lambda_2$ is  an ordered pair of singular values of $F$. We view $W_{_{\rm eH}}^{\rm iso}$ as the composition $W_{_{\rm eH}}^{\rm iso}=Y\circ Z$, where $Y:[1,\infty)\rightarrow\mathbb{R}$ is given by
$
Y(\theta)=e^{\frac{k}{2}\, \log^2\theta},  \theta\in [1,\infty),
$
and $Z:{\rm GL}^+(2)\rightarrow[1,\infty)$ is given by
$
Z(F)=\frac{\lambda_1^2}{\det F}\,, \ F \in {\rm GL}^+(2).
$
For $k\geq \frac{1}{4}$ the function $Y(\theta)$ is convex  and  it is also increasing, see Lemma \ref{lemaJH} and Figure \ref{grafy} .
\begin{figure}[h!]\begin{center}
\begin{minipage}[h]{0.7\linewidth}
\includegraphics[scale=0.8]{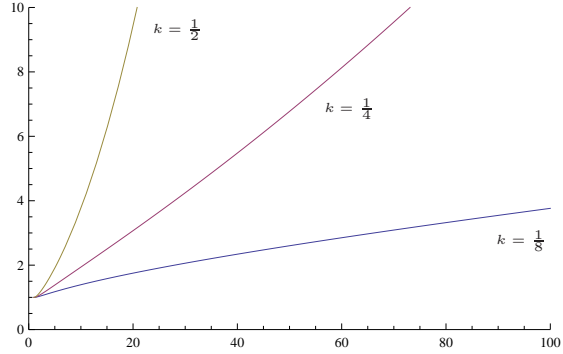}
\centering
\put(-25,40){\tiny{$k=\frac{1}{8}$}}
\put(-90,90){\tiny{$k=\frac{1}{4}$}}
\put(-155,120){\tiny{$k=\frac{1}{2}$}}
\caption{\footnotesize{ The graphic of the function $Y:[1,\infty)\rightarrow\mathbb{R}$,
$
Y(\theta)=e^{\frac{k}{2}\, \log^2\theta},  \theta\in [1,\infty),
$ for different values of the fitting parameter $k$. }}
\label{grafy}
\end{minipage}
\end{center}
\end{figure}
Hence, in order to prove the polyconvexity of $W_{_{\rm eH}}^{\rm iso}$, in view of Lemma \ref{lemaJH}, it is suffices to prove that $Z$ is polyconvex, since we have the following result:
\begin{lemma}
If $\, Y:[1,\infty)\rightarrow\mathbb{R}$ is increasing and convex and $Z:{\rm GL}^+(2)\rightarrow[1,\infty)$  is polyconvex, then $Y\circ Z$ is polyconvex.
\end{lemma}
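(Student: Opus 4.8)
The plan is to verify directly the defining property of planar polyconvexity for the composition $Y\circ Z$. Recall that a function $W:\R^{2\times 2}\to\R\cup\{+\infty\}$ is polyconvex precisely when there exists a convex function $P:\R^{2\times 2}\times\R\to\R\cup\{+\infty\}$ with $W(F)=P(F,\det F)$ for all $F$. Since $Z$ is assumed polyconvex, I would fix a convex $P_Z:\R^{2\times 2}\times\R\to\R\cup\{+\infty\}$ such that $Z(F)=P_Z(F,\det F)$ for every $F\in\GLpz$. The candidate representative for $Y\circ Z$ is then $P:=\widetilde Y\circ P_Z$, where $\widetilde Y$ is a suitable convex, nondecreasing extension of $Y$ to the whole real line; the representation $P(F,\det F)=Y(Z(F))$ will hold because $Z$ takes values in $[1,\infty)$, the set on which $\widetilde Y$ coincides with $Y$.

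The first technical step is the extension of $Y$. The function $Y$ is only defined on $[1,\infty)$, whereas the convex function $P_Z$ may take values anywhere in $\R\cup\{+\infty\}$, so $Y$ cannot be composed with $P_Z$ directly. I would set $\widetilde Y(t):=Y(t)$ for $t\ge 1$ and $\widetilde Y(t):=Y(1)$ for $t<1$ (a flat extension), with the convention $\widetilde Y(+\infty)=+\infty$. Because $Y$ is convex and increasing, its right derivative at $1$ is nonnegative, so joining the constant branch (slope $0$) to $Y$ at $t=1$ gives a function whose slope does not decrease across $t=1$; hence $\widetilde Y$ is convex and nondecreasing on all of $\R$ while still agreeing with $Y$ on $[1,\infty)$.

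The second step is the convexity of $P=\widetilde Y\circ P_Z$, which is the classical fact that a nondecreasing convex function composed with a convex function is convex. Concretely, for $X_0,X_1\in\R^{2\times 2}\times\R$ and $\tau\in[0,1]$, convexity of $P_Z$ gives $P_Z(\tau X_0+(1-\tau)X_1)\le \tau P_Z(X_0)+(1-\tau)P_Z(X_1)$; applying the nondecreasing map $\widetilde Y$ preserves this inequality, and convexity of $\widetilde Y$ then bounds the right-hand side by $\tau\,\widetilde Y(P_Z(X_0))+(1-\tau)\,\widetilde Y(P_Z(X_1))$. Chaining the two inequalities yields convexity of $P$. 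Evaluating on the constraint set, $P(F,\det F)=\widetilde Y(P_Z(F,\det F))=\widetilde Y(Z(F))=Y(Z(F))$ since $Z(F)\ge 1$; thus $Y\circ Z=P(\cdot,\det(\cdot))$ with $P$ convex, which is exactly polyconvexity.

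The only genuinely delicate point is the monotonicity hypothesis on $Y$: it is essential in the chaining step, since without it applying $\widetilde Y$ to the first inequality would not preserve the inequality sign, and indeed a composition of two convex functions need not be convex in general. Some care is also needed with the extended value $+\infty$ that $P_Z$ may attain, which is why the convention $\widetilde Y(+\infty)=+\infty$ is recorded; with it the chain of inequalities remains valid in $\R\cup\{+\infty\}$, and no growth or finiteness assumption on $P_Z$ is required.
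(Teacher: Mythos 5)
Your proof is correct and follows essentially the same route as the paper: the paper simply invokes Rockafellar's composition theorem (a nondecreasing convex function composed with a convex function is convex), applied with the arguments interpreted as pairs $(F,\delta)$, which is exactly the chaining argument you write out for $\widetilde Y\circ P_Z$. Your explicit flat extension of $Y$ below $t=1$ and the handling of the value $+\infty$ are details the paper leaves implicit in its citation, but they do not constitute a different approach.
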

\begin{proof}
This is proved in exactly the same way as \cite[Theorem 5.1]{Rockafellar70}; only the elements $x$ and $y$ of the proof have to be interpreted as pairs $(F, \delta)$, where $F\in{\rm GL}^+(2)$ and $\delta>0$. The generalization to arbitrary dimension, if needed, is straightforward.
\end{proof}
\begin{lemma}
The function $Z$ is polyconvex on ${\rm GL}^+(2)$.
\end{lemma}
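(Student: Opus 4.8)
The plan is to verify polyconvexity directly from the definition by exhibiting $Z$ as the value at $(F,\det F)$ of a jointly convex function of the pair $(F,\delta)$. In the planar setting this means producing a convex map $P\colon\R^{2\times 2}\times(0,\infty)\to\R$ with $Z(F)=P(F,\det F)$ for every $F\in\GLpz$. Since $\det F>0$ on $\GLpz$ it is enough to have $P$ convex on the convex set $\R^{2\times 2}\times(0,\infty)$; the region $\det F\le 0$ plays no role here, as $W_{_{\rm eH}}$ is anyway $+\infty$ there.

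The starting observation is that the largest singular value $\lambda_{1}=\lambda_{\max}$ is nothing but the spectral (operator) norm $\|F\|_{\mathrm{op}}=\sup_{|v|=1}|Fv|$. Being a norm, $F\mapsto\lambda_1$ is convex and positively one-homogeneous on all of $\R^{2\times 2}$; consequently $g(F):=\lambda_1^2=\|F\|_{\mathrm{op}}^2$ is a nonnegative convex function, homogeneous of degree two. (If one prefers to avoid operator-norm language, the same conclusions follow from the closed form $\lambda_1^2=\tfrac12\bigl(\|F\|^2+\sqrt{\|F\|^4-4(\det F)^2}\bigr)$ with $\|\cdot\|$ the Frobenius norm, but convexity of a matrix norm is the cleaner route.)

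I would then recognize $Z$ as the \emph{perspective} of $g$. Recall that for a convex $g$ the map $(F,\delta)\mapsto\delta\,g(F/\delta)$, $\delta>0$, is jointly convex. Applying this to $g(F)=\lambda_1^2$ and using its degree-two homogeneity gives
\[
P(F,\delta):=\delta\,g(F/\delta)=\delta\cdot\frac{\lambda_1(F)^2}{\delta^2}=\frac{\lambda_1^2}{\delta},
\]
which is therefore jointly convex on $\R^{2\times 2}\times(0,\infty)$. Setting $\delta=\det F$ yields $P(F,\det F)=\tfrac{\lambda_1^2}{\det F}=Z(F)$, so $Z$ is polyconvex on $\GLpz$.

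There is no genuinely hard step: the content consists of the two standard convex-analysis inputs, namely convexity of the spectral norm and joint convexity of the perspective transform, together with the algebraic identity $\lambda_1^2/\det F=\lambda_1/\lambda_2$ that makes $Z$ the natural isochoric stretch ratio. The only point demanding minor care is the domain, since the perspective is well defined and convex precisely for $\delta>0$ (equivalently on $\GLpz$). Combined with the preceding composition lemma for the increasing convex function $Y$, this establishes polyconvexity of $W_{_{\rm eH}}^{\rm iso}=Y\circ Z$ for $k\ge\tfrac14$, which is the objective of the section.
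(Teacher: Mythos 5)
Your proof is correct, but it reaches the key convexity fact by a genuinely different route than the paper. Both arguments rest on the same structural decomposition: $Z(F)=P(F,\det F)$ where $P(F,\delta)=\lambda_1(F)^2/\delta$ must be shown jointly convex on $\R^{2\times 2}\times(0,\infty)$. The paper gets there in two steps: a Hessian computation showing that the scalar function $f(a,b)=a^p/b^q$ is convex on $[0,\infty)\times[0,\infty)$ if and only if $q\leq p-1$ (then specializing to $p=2$, $q=1$), followed by a monotone-composition argument using that $f$ is increasing in its first argument and that $F\mapsto\lambda_1(F)$ is convex --- the latter proved in the paper's appendix via the von Neumann trace inequality, following Ball. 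You instead observe that $\lambda_1$ is the operator norm, hence convex by the triangle inequality alone, so $g(F)=\lambda_1(F)^2$ is convex; then the perspective transform $(F,\delta)\mapsto\delta\,g(F/\delta)$ is jointly convex, and two-homogeneity of $g$ collapses it to exactly $\lambda_1^2/\delta$. Your packaging buys three things: it eliminates the Hessian computation, it eliminates the matrix-level monotone-composition step (the perspective needs no monotonicity hypothesis), and it replaces the appendix machinery by the elementary fact that a norm is convex; it also generalizes at no cost, since any nonnegative convex $p$-homogeneous $g$ yields polyconvexity of $g(F)/\delta^{p-1}$, which is precisely the extremal case $q=p-1$ of the paper's scalar criterion. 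What the paper's route buys in exchange is a reusable if-and-only-if criterion for $a^p/b^q$ and the independently interesting appendix result on convexity of $\langle r,\lambda\rangle$. One small caution: your parenthetical claim that convexity of $\lambda_1^2$ ``follows'' from the closed form $\tfrac12\bigl(\|F\|^2+\sqrt{\|F\|^4-4(\det F)^2}\bigr)$ is not immediate --- that expression is far from obviously convex --- but this aside is inessential, as your norm argument is the one doing the work.
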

\begin{proof}
An elementary evaluation of the Hessian matrix shows that for $p\geq 0, q\geq 0$ the function $f(a,b)=\frac{a^p}{b^q}$ is convex on $[0,\infty)\times [0,\infty)$ if and only if $q\leq p-1$. We take $p=2$ and $q=1$ and we observe that
\begin{align}
Z(F)=f(\lambda_1,\det F),
\end{align}
for every $F\in {\rm GL}^+(2)$. Let
$
f_1(F,\delta)=f(\lambda_1,\delta),
 $
for each $F\in  {\rm GL}^+(2)$ and $\delta>0$. Using the well-known fact that $F\mapsto \lambda_1(F)$ (the largest singular value) is convex (see e.g. \cite[Lemma 5.3]{Ball77} and also Appendix \ref{convmax}), we now prove that $f_1$ is convex. Indeed, since $f$ is increasing in the first argument, we have for any $t$, $0\leq t\leq 1$, and any $F_
\alpha\in  {\rm GL}^+(2)$, $\alpha=1,2$, that
\begin{align}
f_1((1-t)\, F_1+t\, F_2, (1-t)\, \delta_1+t\, \delta_2)&=
f(\lambda_1((1-t)\, F_1+t\, F_2), (1-t)\, \delta_1+t\, \delta_2)\\
&\leq (1-t)\, f(\lambda_1( F_1), \delta_1)+t\,f(\lambda_1(F_2), \delta_2)=(1-t)\, f_1( F_1, \delta_1)+t\,f_1(F_2, \delta_2),\notag
\end{align}
where we used the convexity of $f$ asserted above. Thus
$
Z(F)=f_1(F,\det F)
$
is polyconvex.
\end{proof}
Therefore, we conclude:
\begin{proposition}
If $k\geq \frac{1}{4}$, then the function $F\mapsto {W_{_{\rm eH}}^{\rm iso}}(F)=e^{k\, \|\dev_2\log U\|^2}$ is polyconvex on ${\rm GL}^+(2)$.
\end{proposition}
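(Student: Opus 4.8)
The plan is to assemble the three pieces already laid out above, since the proposition is the capstone of that construction rather than a fresh argument. First I would record the factorization $W_{_{\rm eH}}^{\rm iso}=Y\circ Z$ established just before the statement, with $Y(\theta)=e^{\frac{k}{2}\log^2\theta}$ on $[1,\infty)$ and $Z(F)=\frac{\lambda_1^2}{\det F}$ on ${\rm GL}^+(2)$. It is worth noting explicitly that this factorization is legitimate on the whole domain: for an ordered pair of singular values $\lambda_1\geq\lambda_2$ one has $\det F=\lambda_1\lambda_2$, hence $Z(F)=\frac{\lambda_1^2}{\lambda_1\lambda_2}=\frac{\lambda_1}{\lambda_2}\geq 1$, so $Z$ indeed takes values in the interval $[1,\infty)$ on which $Y$ is defined, increasing and (for the relevant $k$) convex. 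Evaluating the composition reproduces $e^{\frac{k}{2}\log^2(\lambda_1/\lambda_2)}=e^{k\,\|\dev_2\log U\|^2}$, in agreement with Lemma \ref{thm:w2d}.

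Next I would verify the two hypotheses required by the composition lemma. Writing $Y(\theta)=e^{\widehat{k}\,(\log\theta)^2}$ with $\widehat{k}=\frac{k}{2}$, Lemma \ref{lemaJH} in the case $m=2$ gives convexity of $Y$ exactly when $\widehat{k}\geq\frac{1}{8}$, that is $k\geq\frac14$, which is precisely the standing hypothesis; and $Y$ is increasing on $[1,\infty)$ because $\theta\mapsto(\log\theta)^2$ is increasing there and $\exp$ is increasing. Polyconvexity of $Z$ is the content of the lemma immediately preceding the proposition. With both hypotheses in force, the composition lemma yields that $Y\circ Z=W_{_{\rm eH}}^{\rm iso}$ is polyconvex on ${\rm GL}^+(2)$, which is the claim.

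The genuine difficulty in the overall development does not lie in this final assembly but upstream, in the polyconvexity of $Z$. That step marries the nontrivial convexity of the largest singular value $F\mapsto\lambda_1(F)$ with the elementary Hessian criterion that $f(a,b)=\frac{a^p}{b^q}$ is convex on the positive quadrant iff $q\leq p-1$, applied here at $(p,q)=(2,1)$; the link between them is the monotonicity of $f$ in its first slot, which is exactly what lets one precompose with the convex map $\lambda_1$ without destroying convexity in the pair $(F,\delta)$. For the present proposition itself the only point I would be careful to double-check is the bookkeeping of constants, namely that the threshold $k\geq\frac14$ is precisely the preimage of $\widehat{k}\geq\frac18$ under $\widehat{k}=k/2$, together with the domain check above ensuring that ``increasing'' for $Y$ is applied on the range actually attained by $Z$.
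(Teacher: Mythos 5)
Your proposal is correct and follows essentially the same route as the paper: the proposition there is likewise obtained, with no new argument, by assembling the factorization $W_{_{\rm eH}}^{\rm iso}=Y\circ Z$, the convexity and monotonicity of $Y$ for $k\geq\frac{1}{4}$ (Lemma \ref{lemaJH} with $m=2$ and exponent $\frac{k}{2}\geq\frac{1}{8}$), the polyconvexity of $Z$, and the composition lemma. Your explicit check that $Z(F)=\lambda_1/\lambda_2\geq 1$, so that the monotonicity of $Y$ is invoked only on the range actually attained by $Z$, is a detail the paper leaves implicit but is entirely consistent with its argument.
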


In view of the above proposition and using Corollary \ref{corolarbun}  we conclude that:

\begin{theorem}\label{mainth} The functions $W_{_{\rm eH}}:\R^{2\times 2}\to \R$ from the family of exponentiated Hencky type energies
are polyconvex for $\mu>0, \kappa>0$, $k\geq\dd\frac{1}{4}$ and $\widehat{k}\dd\geq \tel8$.
\end{theorem}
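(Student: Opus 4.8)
The plan is to combine the two pieces of the energy---the isochoric part $W_{_{\rm eH}}^{\rm iso}$ and the volumetric part $W_{_{\rm eH}}^{\rm vol}$---and to invoke the fact that the sum of two polyconvex functions is polyconvex. Concretely, I would write $W_{_{\rm eH}}(F)=\frac{\mu}{k}\,W_{_{\rm eH}}^{\rm iso}(F)+\frac{\kappa}{2\widehat{k}}\,W_{_{\rm eH}}^{\rm vol}(F)$ for $\det F>0$, where $W_{_{\rm eH}}^{\rm iso}(F)=e^{k\,\|\dev_2\log U\|^2}$ and $W_{_{\rm eH}}^{\rm vol}(F)=e^{\widehat{k}\,(\log\det F)^2}$. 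Each summand has a nonnegative coefficient (since $\mu,\kappa,k,\widehat{k}>0$), so if both factors are polyconvex, then so is the conical combination, and hence $W_{_{\rm eH}}$ itself.

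First I would dispose of the isochoric part. This is precisely the content of the Proposition immediately preceding the theorem: for $k\geq\frac14$ the map $F\mapsto e^{k\,\|\dev_2\log U\|^2}$ is polyconvex on ${\rm GL}^+(2)$, established via the decomposition $W_{_{\rm eH}}^{\rm iso}=Y\circ Z$ with $Y$ increasing and convex and $Z$ polyconvex. I would simply cite that proposition rather than reprove it. Next I would handle the volumetric part. Applying Proposition \ref{corolarbun} with $m=2$ gives that $F\mapsto e^{\widehat{k}\,(\log\det F)^2}$ is rank-one convex for $\widehat{k}\geq\frac18$; but in fact the stronger statement that it is polyconvex follows because it depends on $F$ only through $\det F$, and Lemma \ref{lemaJH} (with $m=2$) asserts that the scalar map $t\mapsto e^{\widehat{k}(\log t)^m}$ is convex for $\widehat{k}\geq\frac{1}{m^{m+1}}=\frac18$. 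A function of the form $g(\det F)$ with $g$ convex is automatically polyconvex, since it is a convex function of the minor $\det F$.

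The remaining step is to assemble these two observations into polyconvexity of the full energy on all of $\R^{2\times 2}$, taking care of the extended-real-valued nature of $W_{_{\rm eH}}$. For $\det F\leq 0$ the energy equals $+\infty$; this causes no difficulty because polyconvexity is formulated for the representative $P(F,\det F)$, and one extends $P$ to equal $+\infty$ whenever its second argument is nonpositive, preserving convexity of $P$ on $\R^{4}\times\R$. I would therefore define $P(F,\delta)=\frac{\mu}{k}P^{\rm iso}(F,\delta)+\frac{\kappa}{2\widehat{k}}\,g(\delta)$ for $\delta>0$ and $P=+\infty$ for $\delta\leq 0$, where $P^{\rm iso}$ and $g$ are the convex representatives supplied by the two preceding results, and observe that $P$ is convex and $W_{_{\rm eH}}(F)=P(F,\det F)$.

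The main obstacle, such as it is, is bookkeeping rather than genuine mathematics: one must verify that the convexity representatives combine correctly as a function of the joint variable $(F,\det F)$ and that the $+\infty$ extension is handled without breaking convexity at the boundary $\det F=0$. Since all the analytically substantive work (convexity of $Y$, polyconvexity of $Z$, convexity of $t\mapsto e^{\widehat{k}(\log t)^2}$) has already been carried out in the preceding lemmas and proposition, the theorem reduces to the elementary and standard facts that nonnegative linear combinations of polyconvex functions are polyconvex and that a convex function of a single minor is polyconvex.
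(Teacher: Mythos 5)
Your proposal is correct and follows essentially the same route as the paper: polyconvexity of the isochoric part from the preceding proposition (via the $Y\circ Z$ decomposition), polyconvexity of the volumetric part from the convexity of $t\mapsto e^{\widehat{k}(\log t)^2}$ for $\widehat{k}\geq\tel8$ (Lemma \ref{lemaJH}) together with the standard fact that a convex function of $\det F$ is polyconvex, and closure of polyconvexity under nonnegative linear combinations. If anything, you are slightly more careful than the paper, which at this step cites Proposition \ref{corolarbun} (stated only as rank-one convexity of the volumetric part) where polyconvexity is what is actually used; your explicit argument via the convex representative $g(\delta)$ and the $+\infty$ extension for $\delta\leq 0$ supplies exactly that missing bookkeeping.
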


\section{Existence result}

In plane elastostatics, having proved the coercivity and the  polyconvexity of the energy $W_{_{\rm eH}}(U)$ for $k\geq \frac{1}{4}$ and $\widehat{k}\geq\frac{1}{8}$, it is a standard matter to improve the existence
results established in \cite{NeffGhibaPoly}.

\begin{theorem}\label{mainexist}{\rm (Existence of minimizers)} Let the reference configuration
$\Omega\subset \mathbb{R}^2$ be a bounded smooth domain and let $\Gamma_D$ be a non-empty and relatively open part of the boundary  $\partial \Omega$. Assume that
$I(\varphi)=\int_\Omega W_{_{\rm eH}}(\nabla \varphi(x)) dx$ where
$
W_{_{\rm eH}}(F)=\widehat{W}_{_{\rm eH}}(U)= \frac{\mu}{k}\,e^{k\,\|\dev_2\log U\|^2}+\frac{\kappa}{2\widehat{k}}\,e^{\widehat{k}\,|{\rm tr}(\log U)|^2}.
$
Let $\varphi_0\in W^{1,q}(\Omega), \ q\geq1$ be given with $I(\varphi_0)<\infty$ and  $\mu>0,\ \kappa>0$, $k\geq \frac{1}{4}$ and $\widehat{k}\geq \frac{1}{8}$. Then the problem
\begin{align}
\min\left\{I(\varphi)=\int_\Omega W_{_{\rm eH}}(\nabla \varphi(x)) dx, \ \varphi(x)=\varphi_0(x) \quad \text{for} \ x\in \Gamma_D\subset \partial \Omega,\quad  \varphi\in W^{1,q}(\Omega)\right\}
\end{align}
admits at least one solution. Moreover, $\varphi\in W^{1,q}(\Omega),\ q\geq1$. Since we do not know whether $\det \nabla \varphi\geq c>0$ and since the energy does not satisfy a polynomial growth condition, it is not clear whether the Euler-Lagrange equations are satisfied in a weak sense.
\end{theorem}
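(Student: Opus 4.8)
The plan is to run the direct method of the calculus of variations, whose two required ingredients are now both available: the polyconvexity of $W_{_{\rm eH}}$ in the enlarged range $k\geq\frac14$, $\widehat k\geq\frac18$ (Theorem \ref{mainth}) and the $q$-coercivity estimate recalled in Section 2. First I would fix the admissible class $\mathcal A=\{\varphi\in W^{1,q}(\Omega):\varphi=\varphi_0\text{ on }\Gamma_D\}$ and note that it is nonempty, since $\varphi_0\in\mathcal A$ with $I(\varphi_0)<\infty$; as $W_{_{\rm eH}}$ is a sum of exponentials with positive prefactors, $I\geq0$, hence $m:=\inf_{\mathcal A}I\in[0,\infty)$. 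Then I take a minimizing sequence $(\varphi_j)\subset\mathcal A$ with $I(\varphi_j)\to m$.

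By the coercivity estimate, $\nabla\varphi_j$ is bounded in $L^q(\Omega)$ for every finite $q$; together with the Dirichlet datum on the boundary piece $\Gamma_D$ of positive surface measure, a Poincaré-type inequality bounds $\varphi_j$ in $W^{1,q}(\Omega)$. Since the coercivity forces any finite-energy map to have gradient in every $L^p(\Omega)$, the value of the infimum is independent of the chosen exponent, so I may work with some $q\geq2$; reflexivity then yields a subsequence $\varphi_j\rightharpoonup\varphi^\ast$ in $W^{1,q}(\Omega)$, and weak continuity of the trace gives $\varphi^\ast=\varphi_0$ on $\Gamma_D$, so that $\varphi^\ast\in\mathcal A$.

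The decisive step is weak lower semicontinuity through the polyconvex representation $W_{_{\rm eH}}(F)=P(F,\det F)$ with $P$ convex (in two dimensions the cofactor is linear in $F$, so no separate minor must be controlled). The bound on $I(\varphi_j)$ forces strong integrability of the Jacobian: because the volumetric term $\tfrac{\kappa}{2\widehat k}\,e^{\widehat k(\log\det F)^2}$ dominates every power $(\det F)^{\pm p}$, it keeps $\det\nabla\varphi_j$ bounded in some $L^r(\Omega)$, $r>1$, and $(\det\nabla\varphi_j)^{-1}$ bounded in every $L^p(\Omega)$. By the classical weak continuity of minors (Ball \cite{Ball77}, see also \cite{Dacorogna08}) this gives $\det\nabla\varphi_j\rightharpoonup\det\nabla\varphi^\ast$ in $L^r(\Omega)$, while the bound on the inverse determinant ensures $\det\nabla\varphi^\ast>0$ a.e., so that the finite branch of $W_{_{\rm eH}}$ applies to the limit. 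With $(\nabla\varphi_j,\det\nabla\varphi_j)\rightharpoonup(\nabla\varphi^\ast,\det\nabla\varphi^\ast)$ and $P$ convex, lower semicontinuity of convex integrands yields $m=\liminf_j I(\varphi_j)\geq I(\varphi^\ast)\geq m$, so $\varphi^\ast$ minimizes $I$; the super-coercivity moreover places $\varphi^\ast$ in $W^{1,q}(\Omega)$ for every finite $q$.

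The main obstacle is exactly this weak-continuity-plus-nondegeneracy step: one must pass to the limit $\det\nabla\varphi_j\rightharpoonup\det\nabla\varphi^\ast$ and simultaneously exclude $\det\nabla\varphi^\ast\leq0$ on a set of positive measure, since otherwise the candidate limit would carry infinite energy. Here the super-polynomial coercivity in both $\det F$ and $(\det F)^{-1}$ does the work, making the passage smoother than in the generic polyconvex case, so that beyond this point the argument is the routine direct-method bookkeeping already indicated in \cite{NeffGhibaPoly}. The honest caveat, flagged in the statement itself, is that $W_{_{\rm eH}}$ satisfies no upper polynomial growth bound, so existence is obtained by the direct method alone and one cannot infer that the minimizer solves the Euler--Lagrange equations.
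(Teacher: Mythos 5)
Your proposal is correct and takes essentially the same route as the paper, whose entire proof consists of combining the improved polyconvexity (Theorem \ref{mainth}) with the $q$-coercivity recalled in Section 2 and then invoking the standard direct method as in \cite{NeffGhibaPoly}; your write-up simply makes that standard argument explicit. Two harmless refinements: take $q>2$ strictly (possible since the gradients are bounded in every finite $L^q$) so that the weak continuity of the Jacobian is immediate, and note that $\det \nabla \varphi^\ast>0$ a.e.\ already follows from the lower-semicontinuity step once $P$ is extended by $+\infty$ for nonpositive determinant, so your separate argument via the bound on $(\det\nabla\varphi_j)^{-1}$ is redundant.
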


\medskip

\noindent\textbf{Acknowledgment. }  Miroslav  {\v{S}}ilhav{\'y} was supported by grant RVO: 67985840.

\bibliographystyle{plain} 

\begin{thebibliography}{10}

\bibitem{eremeyev2007constitutive}
H.~Altenbach, V.~Eremeyev, L.~Lebedev, and L.A. Rend{\'o}n.
\newblock Acceleration waves and ellipticity in thermoelastic micropolar media.
\newblock {\em Arch. Appl. Mech.}, 80(3):217--227, 2010.

\bibitem{Ball78}
J.M. Ball.
\newblock Constitutive inequalities and existence theorems in nonlinear
  elastostatics.
\newblock In R.J. Knops, editor, {\em Herriot {W}att {S}ymposion: {N}onlinear
  {A}nalysis and {M}echanics.}, volume~1, pages 187--238. Pitman, London, 1977.

\bibitem{Ball77}
J.M. Ball.
\newblock Convexity conditions and existence theorems in nonlinear elasticity.
\newblock {\em Arch. Rat. Mech. Anal.}, 63:337--403, 1977.

\bibitem{Ball02}
J.M. Ball.
\newblock Some open problems in elasticity.
\newblock In P.~Newton et~al., editor, {\em Geometry, mechanics, and
  dynamics.}, pages 3--59. Springer, New-York, 2002.

\bibitem{Balzani_Neff_Schroeder05}
D.~Balzani, P.~Neff, J.~Schr\"oder, and G.A. Holzapfel.
\newblock A polyconvex framework for soft biological tissues. {A}djustment to
  experimental data.
\newblock {\em Int. J. Solids Struct.}, 43(20):6052--6070, 2006.

\bibitem{Bertram05}
A.~Bertram, T.~B\"ohlke, and M.~\v{S}ilhav\'{y}.
\newblock On the rank 1 convexity of stored energy functions of physically
  linear stress-strain relations.
\newblock {\em J. Elasticity}, 86:235--243, 2007.

\bibitem{Neff_log_inequality13}
M.~B\^{i}rsan, P.~Neff, and J.~Lankeit.
\newblock Sum of squared logarithms: {A}n inequality relating positive definite
  matrices and their matrix logarithm.
\newblock {\em J. Inequal. Appl.}, 2013(1):168, 2013.

\bibitem{Borwein2010}
J.~M. {Borwein} and J.~D. {Vanderwerff}.
\newblock {\em {Convex functions. Constructions, characterizations and
  counterexamples.}}
\newblock Cambridge: Cambridge University Press, 2010.

\bibitem{Criscione2000}
J.C. {Criscione}, J.D. {Humphrey}, A.S. {Douglas}, and W.C. {Hunter}.
\newblock {An invariant basis for natural strain which yields orthogonal stress
  response terms in isotropic hyperelasticity.}
\newblock {\em {J. Mech. Phys. Solids}}, 48(12):2445--2465, 2000.

\bibitem{Dacorogna08}
B.~Dacorogna.
\newblock {\em Direct {M}ethods in the {C}alculus of {V}ariations.}, volume~78
  of {\em Applied {M}athematical {S}ciences}.
\newblock Springer, Berlin, 2. edition, 2008.

\bibitem{DacorognaKoshigoe}
B.~Dacorogna and H.~Koshigoe.
\newblock On the different notions of convexity for rotationally invariant
  functions.
\newblock {\em Ann. Fac. Sci. Toulouse}, 2:163--184, 1993.

\bibitem{DacorognaMarcellini}
B.~Dacorogna and P.~Marcellini.
\newblock {\em Implicit partial differential equations}.
\newblock Birkh\"{a}user, Boston, 1999.

\bibitem{DacorognaMarechal}
B.~Dacorogna and P.~Mar\'{e}chal.
\newblock A note on spectrally defined polyconvex functions.
\newblock In M.~Carozza et~al., editor, {\em Proceedings of the workshop ``New
  Developments in the calculus of variations"}, pages 27--54. Edizioni
  Scientifiche Italiane, Napoli, 2006.

\bibitem{Davis57}
C.~Davis.
\newblock All convex invariant functions of hermitian matrices.
\newblock {\em Arch. Math.}, 8:276--278, 1957.

\bibitem{glugegraphical}
R.~Gl{\"u}ge and J.~Kalisch.
\newblock Graphical representations of the regions of rank-one-convexity of
  some strain energies.
\newblock {\em Tech. Mech.}, 32:227--237, 2012.

\bibitem{Hartmann_Neff02}
S.~Hartmann and P.~Neff.
\newblock Polyconvexity of generalized polynomial type hyperelastic strain
  energy functions for near incompressibility.
\newblock {\em Int. J. Solids Struct.}, 40(11):2767--2791, 2003.

\bibitem{LankeitNeffNakatsukasa}
J.~Lankeit, P.~Neff, and Y.~Nakatsukasa.
\newblock The minimization of matrix logarithms: {O}n a fundamental property of
  the unitary polar factor.
\newblock {\em Lin. Alg. Appl.}, 449(0):28 -- 42, 2014.

\bibitem{Lewis96b}
A.S. {Lewis}.
\newblock {Convex analysis on the Hermitian matrices.}
\newblock {\em {SIAM J. Optim.}}, 6(1):164--177, 1996.

\bibitem{Lewis03}
A.S. {Lewis}.
\newblock {The mathematics of eigenvalue optimization.}
\newblock {\em {Math. Program.}}, 97(1-2 (B)):155--176, 2003.

\bibitem{Lewis96}
A.S. {Lewis} and M.L. {Overton}.
\newblock {Eigenvalue optimization.}
\newblock In {\em {Acta Numerica Vol. 5, 1996}}, pages 149--190. Cambridge:
  Cambridge University Press, 1996.

\bibitem{Mielke05JC}
A.~Mielke.
\newblock Necessary and suffcient conditions for polyconvexity of isotropic
  functions.
\newblock {\em J. Conv. Anal.}, 12(2):291--314, 2005.

\bibitem{mirsky1959trace}
L.~Mirsky.
\newblock On the trace of matrix products.
\newblock {\em Mathematische Nachrichten}, 20(3-6):171--174, 1959.

\bibitem{mirsky1975trace}
L.~Mirsky.
\newblock A trace inequality of {John von Neumann}.
\newblock {\em Monatshefte f{\"u}r Mathematik}, 79(4):303--306, 1975.

\bibitem{Montella15}
G.~Montella, S.~Govindjee, and P.~Neff.
\newblock The exponentiated-{H}encky strain energy in modeling tire derived
  material for moderately large deformation.
\newblock {\em submitted}, 2015.

\bibitem{Neff_Diss00}
P.~Neff.
\newblock {\em Mathematische {A}nalyse multiplikativer {V}iskoplastizit\"at.
  {P}h.{D}. {t}hesis, {Technische Universit\"at Darmstadt}.}
\newblock Shaker Verlag,
  ISBN:3-8265-7560-1,\texttt{https://www.uni-due.de/$\sim$hm0014/Download\_files/cism\_convexity08.pdf},
  Aachen, 2000.

\bibitem{Neff_Osterbrink_Martin_hencky13}
P.~Neff, B.~Eidel, and R.~Martin.
\newblock Geometry, solid mechanics and logarithmic strain measures. {T}he
  {H}encky energy is the squared geodesic distance of the deformation gradient
  to $\mathrm{SO}(n)$ in any left-invariant, right-$\mathrm{O}(n)$-invariant
  {R}iemannian metric on $\mathrm{GL}(n)$.
\newblock {\em in preparation}, 2015.

\bibitem{NeffEidelOsterbrinkMartin_Riemannianapproach}
P.~{Neff}, B.~{Eidel}, F.~{Osterbrink}, and R.~{Martin}.
\newblock {A Riemannian approach to strain measures in nonlinear elasticity}.
\newblock {\em C. R. Acad. Sci.}, 342:254--257, 2014.

\bibitem{NeffGhibaPlasticity}
P.~Neff and I.D. Ghiba.
\newblock The exponentiated {Hencky}-logarithmic strain energy.{ Part III:
  Coupling} with idealized isotropic finite strain plasticity.
\newblock {\em to appear in Cont. Mech. Thermod., arXiv:1409.7555}, the special
  issue in honour of D.J. Steigmann, 2015.

\bibitem{NeffGhibaLankeit}
P.~Neff, I.D. Ghiba, and J.~Lankeit.
\newblock The exponentiated {H}encky-logarithmic strain energy. {P}art {I}:
  {C}onstitutive issues and rank--one convexity.
\newblock {\em to appear in J. Elasticity}, 2015.

\bibitem{NeffGhibaPoly}
P.~Neff, I.D. Ghiba, J.~Lankeit, R.~Martin, and D.~Steigmann.
\newblock The exponentiated {H}encky-logarithmic strain energy. {P}art {II}:
  {C}oercivity, planar polyconvexity and existence of minimizers.
\newblock {\em to appear in Z. Angew. Math. Phys., arXiv:1408.4430}, 2015.

\bibitem{Neff_Nagatsukasa_logpolar13}
P.~Neff, Y.~Nakatsukasa, and A.~Fischle.
\newblock A logarithmic minimization property of the unitary polar factor in
  the spectral norm and the {Frobenius} matrix norm.
\newblock {\em SIAM J. Matrix Analysis}, 35:1132--1154, 2014.

\bibitem{Ogden83}
R.W. Ogden.
\newblock {\em Non-{L}inear {E}lastic {D}eformations.}
\newblock Mathematics and its Applications. Ellis Horwood, Chichester, 1.
  edition, 1983.

\bibitem{Raoult86}
A.~Raoult.
\newblock Non-polyconvexity of the stored energy function of a
  {S}t.{V}enant-{K}irchhoff material.
\newblock {\em Aplikace Matematiky}, 6:417--419, 1986.

\bibitem{Rockafellar70}
T.~Rockafellar.
\newblock {\em Convex {A}nalysis.}
\newblock Princeton University Press, Princeton, 1970.

\bibitem{Rosakis98}
P.~Rosakis.
\newblock Characterization of convex isotropic functions.
\newblock {\em J. Elasticity}, 49:257--267, 1998.

\bibitem{Rosakis92}
P.~{Rosakis} and H.~{Simpson}.
\newblock {On the relation between polyconvexity and rank-one convexity in
  nonlinear elasticity.}
\newblock {\em J. Elasticity}, 37:113--137, 1995.

\bibitem{Schroeder_Neff01}
J.~Schr\"oder and P.~Neff.
\newblock Invariant formulation of hyperelastic transverse isotropy based on
  polyconvex free energy functions.
\newblock {\em Int. J. Solids Struct.}, 40(2):401--445, 2003.

\bibitem{cism_book_schroeder_neff09}
J.~Schr\"oder and P.~Neff.
\newblock {\em Poly, quasi and rank-one convexity in mechanics}.
\newblock CISM-Course Udine. Springer, 2009.

\bibitem{Schroeder_Neff04}
J.~Schr\"oder, P.~Neff, and D.~Balzani.
\newblock A variational approach for materially stable anisotropic
  hyperelasticity.
\newblock {\em Int. J. Solids Struct.}, 42(15):4352--4371, 2005.

\bibitem{Schroeder_Neff_Ebbing07}
J.~Schr\"oder, P.~Neff, and V.~Ebbing.
\newblock Anisotropic polyconvex energies on the basis of crystallographic
  motivated structural tensors.
\newblock {\em J. Mech. Phys. Solids}, 56(12):3486--3506, 2008.

\bibitem{Walton05}
T.~{Sendova} and J.~R. {Walton}.
\newblock {On strong ellipticity for isotropic hyperelastic materials based
  upon logarithmic strain.}
\newblock {\em {Int. J. Non-Linear Mech.}}, 40(2-3):195--212, 2005.

\bibitem{Silhavy97}
M.~{\v{S}}ilhav{\'y}.
\newblock {\em The {M}echanics and {T}hermomechanics of {C}ontinuous {M}edia.}
\newblock Springer, Berlin, 1997.

\bibitem{Silhavy99b}
M.~{\v{S}}ilhav{\'y}.
\newblock Convexity conditions for rotationally invariant functions in two
  dimensions.
\newblock In Sequeira et~al., editor, {\em Applied Nonlinear Analysis}. Kluwer
  Academic Publisher, New-York, 1999.

\bibitem{SilhavyPRE99}
M.~{\v{S}}ilhav{\'y}.
\newblock On isotropic rank one convex functions.
\newblock {\em Proc. Roy. Soc. Edinburgh}, 129:1081--1105, 1999.

\bibitem{vsilhavy2001rank}
M.~{\v{S}}ilhav{\'y}.
\newblock {Rank 1 convex hulls of isotropic functions in dimension 2 by 2}.
\newblock {\em Math. Bohemica}, 126(2):521--529, 2001.

\bibitem{silhavy2002monotonicity}
M.~{\v{S}}ilhav{\'y}.
\newblock Monotonicity of rotationally invariant convex and rank 1 convex
  functions.
\newblock {\em Proc. Roy. Soc. Edinburgh}, 132:419--435, 2002.

\bibitem{Silhavy02bb}
M.~{\v{S}}ilhav{\'y}.
\newblock An {${\rm {O}}(n)$} invariant rank $1$ convex function that is not
  polyconvex.
\newblock {\em Theoret. Appl. Mech. (Belgrad)}, 28:325--336, 2002.

\bibitem{Silhavy03}
M.~{\v{S}}ilhav{\'y}.
\newblock On {${\rm {SO}}(n)$}-invariant rank $1$ convex functions.
\newblock {\em J. Elasticity}, 71:235--246, 2003.

\bibitem{SteigmannMMS03}
D.J. {Steigmann}.
\newblock {Frame-invariant polyconvex strain-energy functions for some
  anisotropic solids.}
\newblock {\em {Math. Mech. Solids}}, 8(5):497--506, 2003.

\bibitem{SteigmannQJ03}
D.J. {Steigmann}.
\newblock {On isotropic, frame-invariant, polyconvex strain-energy functions.}
\newblock {\em {Q. J. Mech. Appl. Math.}}, 56(4):483--491, 2003.

\bibitem{neumann1937some}
J.~von Neumann.
\newblock Some matrix inequalities and metrization of matrix space.
\newblock {\em Tomsk Univ. Rev}, 1(11):286--300, 1937.

\bibitem{Criscione2005}
J.P. {Wilber} and J.C. {Criscione}.
\newblock {The Baker-Ericksen inequalities for hyperelastic models using a
  novel set of invariants of Hencky strain.}
\newblock {\em {Int. J. Solids Struct.}}, 42(5-6):1547--1559, 2005.

\bibitem{xiao1997logarithmic}
H.~Xiao, O.T. Bruhns, and A.~Meyers.
\newblock Logarithmic strain, logarithmic spin and logarithmic rate.
\newblock {\em Acta Mech.}, 124(1-4):89--105, 1997.

\end{thebibliography}
\addcontentsline{toc}{section}{References}

\begin{footnotesize}

\appendix
\section*{Appendix}\addcontentsline{toc}{section}{Appendix} \addtocontents{toc}{\protect\setcounter{tocdepth}{-1}}

\setcounter{section}{1}

\subsection{The convexity of the largest singular value of $F$}\label{convmax}
\setcounter{subsection}{1}
\setcounter{equation}{0}
\setcounter{theorem}{0}

In this section we outline the proof of the convexity of the function $F\mapsto \lambda_{\rm max}(F)$ (the largest singular value of $F$). First,  we present three lemmas given in \cite[page 364]{Ball77}.
\begin{lemma}\label{LB1}{\rm (von Neumann \cite{neumann1937some}; see also Mirsky \cite{mirsky1959trace,mirsky1975trace})} Let $A, B\in \R^{n\times n}$ have singular values $\alpha_1\geq \alpha_2\geq ...\geq \alpha_n\geq 0$ and $\beta_1\geq \beta_2\geq ...\geq \beta_n\geq 0$. Then
$
|{\rm tr}(A\, B)|\leq \langle \alpha,\beta\rangle.
$
\end{lemma}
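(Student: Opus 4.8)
The plan is to reduce the inequality to an estimate for the diagonal matrices of singular values sandwiched between orthogonal factors, and then to invoke the Birkhoff--von Neumann theorem together with the rearrangement inequality. First I would write the singular value decompositions $A=U_A\,D_\alpha\,V_A^T$ and $B=U_B\,D_\beta\,V_B^T$, where $U_A,V_A,U_B,V_B\in\OO(n)$ and $D_\alpha=\diag(\alpha_1,\dots,\alpha_n)$, $D_\beta=\diag(\beta_1,\dots,\beta_n)$. Using the cyclic invariance of the trace I would rewrite
\[
\tr(A\,B)=\tr\!\big(D_\alpha\,R\,D_\beta\,S\big),\qquad R:=V_A^T U_B\in\OO(n),\quad S:=V_B^T U_A\in\OO(n),
\]
so that the whole problem is reduced to estimating $\tr(D_\alpha R D_\beta S)$ over a pair of orthogonal matrices $R,S$.

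Next I would expand this trace in coordinates as $\tr(D_\alpha R D_\beta S)=\sum_{i,k}\alpha_i\,\beta_k\,R_{ik}\,S_{ki}$ and bound the mixed products by the elementary inequality $|R_{ik}\,S_{ki}|\le \tfrac12\big(R_{ik}^2+S_{ki}^2\big)$. The \emph{decisive observation} is then that the two coefficient arrays $(R_{ik}^2)_{i,k}$ and $(S_{ki}^2)_{i,k}$ are doubly stochastic: each row and each column sums to $1$ because $R$ and $S$ are orthogonal. Hence
\[
|\tr(A\,B)|\le \tfrac12\sum_{i,k}\alpha_i\beta_k\,R_{ik}^2+\tfrac12\sum_{i,k}\alpha_i\beta_k\,S_{ki}^2,
\]
and it remains to bound each sum of the form $\sum_{i,k}\alpha_i\beta_k\,P_{ik}$ with $P$ doubly stochastic.

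For that last step I would apply the Birkhoff--von Neumann theorem, which writes any doubly stochastic $P$ as a convex combination $P=\sum_\ell \mu_\ell\,\Pi_\ell$ of permutation matrices, whence $\sum_{i,k}\alpha_i\beta_k P_{ik}=\sum_\ell \mu_\ell\sum_i \alpha_i\beta_{\pi_\ell(i)}$. Since $\alpha$ and $\beta$ are ordered decreasingly, the rearrangement inequality gives $\sum_i\alpha_i\beta_{\pi(i)}\le\langle\alpha,\beta\rangle$ for every permutation $\pi$, so each sum is $\le\langle\alpha,\beta\rangle$ and the two halves combine to the claimed bound $|\tr(AB)|\le\langle\alpha,\beta\rangle$. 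The absolute value needs no extra work, since applying the estimate to $-B$ (same singular values) covers the other sign.

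I expect the only genuinely delicate point to be the passage from orthogonality of $R,S$ to the doubly stochastic structure and the subsequent use of rearrangement; the algebraic reduction via the singular value decomposition is routine, and the two tools (Birkhoff--von Neumann and the rearrangement inequality) are classical. An alternative that avoids Birkhoff's theorem is to maximize $\tr(D_\alpha R D_\beta S)$ directly over $\OO(n)\times\OO(n)$ by a variational argument, showing that the maximizer is attained at simultaneously diagonalizing (signed) permutations; this route is more computational, and I would prefer the doubly stochastic argument above.
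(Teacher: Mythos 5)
Your proof is correct, but it cannot be compared step-by-step with the paper's, because the paper offers no proof of this lemma at all: it is stated as a classical result, with the proof deferred to the cited works of von Neumann and Mirsky. What you have reconstructed is essentially Mirsky's argument (the second of those citations): reduce via the singular value decompositions and cyclic invariance of the trace to $\tr(D_\alpha\, R\, D_\beta\, S)$ with $R=V_A^T U_B$, $S=V_B^T U_A$ orthogonal; expand as $\sum_{i,k}\alpha_i\beta_k R_{ik}S_{ki}$; use $|R_{ik}S_{ki}|\le\tfrac12(R_{ik}^2+S_{ki}^2)$ together with the fact that the squared entries of an orthogonal matrix form a doubly stochastic array; and conclude with the Birkhoff--von Neumann theorem and the rearrangement inequality, which applies because $\alpha$ and $\beta$ are both nonnegative and decreasingly ordered. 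All of these steps are sound, and your proof has the advantage of being self-contained, which the paper's treatment is not; in particular there is no circularity, since the paper derives the subsequent max-formula (Lemma \ref{LB2}) from the present lemma and not conversely. Two minor comments: the closing remark about applying the estimate to $-B$ is redundant, because you already bounded $|\tr(A\,B)|$ by taking absolute values inside the double sum; and in the doubly stochastic step you implicitly use that the weights $\alpha_i\beta_k$ are nonnegative when passing the pointwise bound $|R_{ik}S_{ki}|\le\tfrac12(R_{ik}^2+S_{ki}^2)$ through the sum --- this holds since singular values are nonnegative, but it deserves a word in the final write-up.
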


\begin{lemma}\label{LB2}{\rm (von Neumann \cite{neumann1937some}; see also Mirsky \cite{mirsky1959trace,mirsky1975trace})} Under the hypotheses of Lemma \ref{LB1}
$
\max\limits_{Q,R\in{\rm O}(n)}\,|\langle A\, Q,R^T \, B^T\rangle|=\langle \alpha,\beta\rangle.
$
\end{lemma}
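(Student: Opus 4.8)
The plan is to prove the two inequalities separately: first that $\langle\alpha,\beta\rangle$ is an upper bound for $|\langle AQ, R^TB^T\rangle|$ over all $Q,R\in{\rm O}(n)$, which follows immediately from Lemma \ref{LB1}, and then that this bound is actually attained, which I would establish by an explicit construction of maximizers $Q,R$ built from the singular value decompositions of $A$ and $B$.

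For the upper bound, I would first rewrite the inner product as a trace: $\langle AQ, R^TB^T\rangle = \tr\bigl((AQ)^T R^T B^T\bigr) = \tr\bigl((Q^TA^T)(R^TB^T)\bigr)$. Setting $A' := Q^TA^T$ and $B' := R^TB^T$, the key observation is that multiplication by orthogonal matrices leaves singular values invariant, so $A'$ has the same singular values $\alpha$ as $A^T$ (hence as $A$) and $B'$ has the same singular values $\beta$ as $B^T$ (hence as $B$). Applying Lemma \ref{LB1} to the product $A'B'$ gives $|\tr(A'B')|\leq\langle\alpha,\beta\rangle$ for every choice of $Q,R\in{\rm O}(n)$; since ${\rm O}(n)\times{\rm O}(n)$ is compact and the map $(Q,R)\mapsto|\langle AQ,R^TB^T\rangle|$ is continuous, the maximum is attained and is at most $\langle\alpha,\beta\rangle$.

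For the matching lower bound, I would insert the singular value decompositions $A = U_A\,\Sigma_A\, V_A^T$ and $B = U_B\,\Sigma_B\, V_B^T$, with $\Sigma_A=\diag(\alpha)$, $\Sigma_B=\diag(\beta)$ and $U_A,V_A,U_B,V_B\in{\rm O}(n)$, into the trace and choose $Q,R$ so that all extraneous orthogonal factors cancel. Concretely, I expect the choice $Q = V_A U_B^T$ and $R = V_B U_A^T$ to reduce $\tr(Q^TA^TR^TB^T)$ to $\tr(\Sigma_A\Sigma_B)=\sum_i\alpha_i\beta_i=\langle\alpha,\beta\rangle$: substituting $A^T=V_A\Sigma_A U_A^T$ and $B^T=V_B\Sigma_B U_B^T$ and using the orthogonality relations $V_A^TV_A=U_A^TU_A=V_B^TV_B=\id$ collapses the product to $U_B\Sigma_A\Sigma_B U_B^T$, whose trace is $\tr(\Sigma_A\Sigma_B)$ by cyclic invariance. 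Since the $\alpha_i,\beta_i$ are nonnegative this value equals $\langle\alpha,\beta\rangle\geq0$, so the absolute value may be dropped, and combined with the upper bound this gives the asserted equality.

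The computation is routine once the decompositions are in place, so I do not anticipate a genuine obstacle; the only point requiring care is the bookkeeping of transposes and of which orthogonal factor each of $Q$ and $R$ is designed to cancel, together with the observation that the attained value is nonnegative. All the real content of the lemma beyond this bookkeeping is carried by Lemma \ref{LB1}.
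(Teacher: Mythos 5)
Your proof is correct and takes essentially the same route as the paper: the upper bound comes from Lemma \ref{LB1} combined with the invariance of singular values under orthogonal multiplication, and attainment comes from inserting the singular value decompositions and choosing $Q=V_AU_B^T$, $R=V_BU_A^T$, which is exactly the paper's choice $Q=R_1^TQ_2^T$, $R=R_2^TQ_1^T$ written in its convention $A=Q_1\diag(\alpha)R_1$, $B=Q_2\diag(\beta)R_2$. The only cosmetic difference is that you work with $\tr(Q^TA^TR^TB^T)$ while the paper works with the equal quantity $\tr(AQBR)$, and your appeal to compactness is redundant given the explicit maximizer.
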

\begin{proof}
There exist orthogonal matrices $Q_1,Q_2,R_1,R_2$ such that $A=Q_1\diag(\alpha)\, R_1$, $B=Q_2\diag(\beta)\, R_2$. Choose $Q=R_1^TQ_2^T$, $R=R_2^T Q_1^T$. Then
$
\tr(A\, Q\, B\, R)=\langle \alpha, \beta\rangle.
$
But for any orthogonal $Q, R$ the matrices $A\, Q$ and $B\, R$ have singular values $\alpha$, $\beta$ respectively. Hence $|\langle A\, Q,R^T \, B^T\rangle|=\tr(A\, Q\, B\, R)\leq \langle \alpha,\beta\rangle$ by Lemma \ref{LB1}.
\end{proof}

Using the above two lemmas, Ball \cite[page 364]{Ball77} proved:
\begin{proposition}\label{LB3}{(\rm Ball \cite[page 364]{Ball77}} Let $r_1\geq r_2\geq ...\geq r_n\geq 0$. Then $\langle r, \lambda\rangle $ is a convex function of $F$, where  $\lambda_1\geq \lambda_2\geq ...\geq \lambda_n\geq 0$ are the  singular values of $F$.
\end{proposition}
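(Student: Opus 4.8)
The plan is to realize the function $F\mapsto \langle r,\lambda(F)\rangle=\sum_{i=1}^n r_i\,\lambda_i(F)$ as a pointwise maximum of linear functions of $F$, and then to invoke the elementary fact that a supremum of convex functions is convex.

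First I would introduce the diagonal matrix $D:=\diag(r_1,\dots,r_n)\in\R^{n\times n}$. Since $r_1\geq r_2\geq\dots\geq r_n\geq 0$, the (ordered) singular values of $D$ are exactly $r_1,\dots,r_n$. Applying Lemma~\ref{LB2} with the choice $A=F$, whose ordered singular values are $\lambda_1\geq\dots\geq\lambda_n\geq 0$, and $B=D$, would then yield the representation
\[
\langle r,\lambda(F)\rangle=\langle \lambda(F),r\rangle=\max_{Q,R\in{\rm O}(n)}\bigl|\langle F\,Q,\,R^T D^T\rangle\bigr|.
\]

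Second I would observe that for each fixed pair $(Q,R)\in{\rm O}(n)\times{\rm O}(n)$ the map $F\mapsto \langle F\,Q,\,R^T D^T\rangle$ is linear in $F$, so that $F\mapsto \bigl|\langle F\,Q,\,R^T D^T\rangle\bigr|$ is the absolute value of a linear functional and hence convex. Consequently the right-hand side above is a pointwise maximum, over the index set ${\rm O}(n)\times{\rm O}(n)$, of convex functions of $F$; since any supremum of convex functions is convex, the function $F\mapsto \langle r,\lambda(F)\rangle$ would be convex on $\R^{n\times n}$.

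I do not expect a genuine obstacle here, since the substantive work is carried by the von Neumann trace inequalities encoded in Lemmas~\ref{LB1} and~\ref{LB2}, which are already established. The only points demanding a little care are the verification that the diagonal matrix $D$ indeed has $r_1,\dots,r_n$ as its ordered singular values and the routine identification of $F\mapsto\langle F\,Q,\,R^T D^T\rangle$ as a linear map; once these are in place, convexity follows immediately from the maximum representation. Specializing to $r=(1,0,\dots,0)$ then recovers the convexity of $F\mapsto\lambda_{\rm max}(F)$ invoked in the proof that $Z$ is polyconvex.
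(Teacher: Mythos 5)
Your proposal is correct and follows essentially the same route as the paper: both apply Lemma \ref{LB2} with $A=F$ and $B=\diag(r_1,\dots,r_n)$ to represent $\langle r,\lambda(F)\rangle$ as a pointwise maximum over ${\rm O}(n)\times{\rm O}(n)$ of functions that are linear (or, in your version, absolute values of linear functions) in $F$, and then invoke the fact that a supremum of convex functions is convex. The only cosmetic difference is that you retain the absolute value from the statement of Lemma \ref{LB2}, whereas the paper drops it by noting $\max_{Q,R}\tr(F\,Q\,B\,R)$ already attains the value $\langle r,\lambda\rangle$; both variants are valid.
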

\begin{proof}
In Lemma \ref{LB2} put $A=F$ and $B=\diag(r_1,r_2,...,r_n)$. Then $\langle r, \lambda\rangle=\max_{Q,R\in{\rm O}(n) }\tr(F\, Q\, B\, R)$. Since each $\tr(F\, Q\, B\, R)$ is a convex function of $F$, we obtain that $\langle r, \lambda\rangle$ is also a convex function of $F$.
\end{proof}

Moreover, it holds true that:
\begin{cor}
The function $F\mapsto \lambda_{\rm max}=\lambda_1=\max_{i=1,...,n}\lambda_i(F)$ is a convex function of $F$, where $\lambda_1\geq \lambda_2\geq ...\geq \lambda_n\geq 0$ are the  singular values of $F$.
\end{cor}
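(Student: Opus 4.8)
The plan is to obtain this corollary as an immediate specialization of Proposition \ref{LB3}. That proposition guarantees that, for every fixed weight vector $r=(r_1,\ldots,r_n)$ satisfying $r_1\geq r_2\geq\ldots\geq r_n\geq 0$, the map $F\mapsto\langle r,\lambda\rangle$ is convex, where $\lambda=(\lambda_1,\ldots,\lambda_n)$ is the ordered tuple of singular values of $F$. The key step is to choose $r$ so that this linear combination collapses precisely onto the single largest singular value.

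Concretely, I would take $r=(1,0,\ldots,0)$, that is $r_1=1$ and $r_2=\ldots=r_n=0$. This vector satisfies the monotonicity hypothesis $r_1\geq\ldots\geq r_n\geq 0$ required in Proposition \ref{LB3}, so that proposition applies verbatim. With this choice one computes
\[
\langle r,\lambda\rangle=\sum_{i=1}^n r_i\,\lambda_i=\lambda_1=\lambda_{\rm max},
\]
so the function $F\mapsto\lambda_{\rm max}(F)$ coincides with the function shown to be convex in Proposition \ref{LB3}. This establishes the claim in arbitrary dimension $n$.

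There is essentially no obstacle here: all the substantive work---reducing the singular-value functional to a maximum of linear functions of $F$ via the von Neumann trace inequality in Lemmas \ref{LB1} and \ref{LB2}---has already been carried out in the proof of Proposition \ref{LB3}. The only points to verify are the admissibility of the degenerate weight $r=(1,0,\ldots,0)$ and the elementary fact that the inner product then reduces to $\lambda_1$; both are immediate. As an alternative that bypasses Proposition \ref{LB3}, one could argue directly from the operator-norm characterization $\lambda_{\rm max}(F)=\max_{|a|=|b|=1}\langle F\,a,b\rangle$, which exhibits $\lambda_{\rm max}$ as a pointwise supremum of functions that are linear (hence convex) in $F$ and is therefore convex; but the specialization route above is the shortest given the results already established in this appendix.
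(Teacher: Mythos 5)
Your proposal is correct and follows exactly the paper's own argument: both apply Proposition \ref{LB3} with the weight vector $r=(1,0,\ldots,0)$, so that $\langle r,\lambda\rangle$ reduces to $\lambda_1=\lambda_{\rm max}$. The alternative you mention via the operator-norm characterization is a valid bypass, but the specialization route you take as primary is precisely what the paper does.
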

\begin{proof}
By letting $r=(1,0,0,...,0)$ in Lemma \ref{LB3} it follows that for $\lambda_1(F)$ is a convex function of $F$.
\end{proof}

\end{footnotesize}
\end{document}